\documentclass[leqno,11pt]{amsart}
\usepackage[colorlinks,pagebackref,hypertexnames=false]{hyperref}

\usepackage{amsmath,amsthm,amssymb,mathrsfs} 
\usepackage[alphabetic,backrefs]{amsrefs}
\usepackage{ae,aecompl}
\usepackage[margin=1in]{geometry} 
\usepackage{url}
\usepackage{comment}

\usepackage[english]{babel}

\usepackage{bookmark}

\numberwithin{equation}{section}

\renewcommand{\epsilon}{\varepsilon}

\def\<{\mathopen{}\left<}
\def\>{\right>\mathclose{}}
\def\({\mathopen{}\left(}
\def\){\right)\mathclose{}}

\newtheorem{theorem}{Theorem}

\newtheorem{proposition}[theorem]{Proposition}
\newtheorem{lemma}[theorem]{Lemma}

\theoremstyle{definition}

\newtheorem{remark}[theorem]{Remark}

\theoremstyle{definition}

\newtheorem{definition}[theorem]{Definition}

\numberwithin{theorem}{section}

\numberwithin{equation}{section}
\numberwithin{figure}{section}

\newcommand{\Comment}[2][\empty]{\ifthenelse{\equal{#1}{\empty}}{\todo[color=gray!10]{#2}}{\todo[color=gray!10,#1]{#2}}}

\author{Jason D. Lotay} 
\address[Jason D. Lotay]{Mathematical Institute, University of Oxford, U.K.}
\urladdr{\href{http://people.maths.ox.ac.uk/lotay/}{http://people.maths.ox.ac.uk/lotay/}}
\email{jason.lotay@maths.ox.ac.uk}

\author{Goncalo Oliveira} 
\address[Gon\c{c}alo Oliveira]{Department of Mathematics and CAMGSD, Instituto Superior T\'ecnico, Portugal}
\urladdr{\href{https://sites.google.com/view/goncalo-oliveira-math-webpage/home}{https://sites.google.com/view/goncalo-oliveira-math-webpage/home}}
\email{{goncalo.m.f.oliveira@tecnico.ulisboa.pt}}
\email{{galato97@gmail.com}}

\title[The mean curvature blows up at nondegenerate neck pinch singularities of LMCF]{The mean curvature blows up at nondegenerate neck pinch singularities of Lagrangian mean curvature flow} 

\date{}

\begin{document}

	\begin{abstract}
		We prove that at any  
		nondegenerate neck pinch singularity of Lagrangian mean curvature flow of surfaces the mean curvature becomes unbounded, with control on the blow-up rate. 
	\end{abstract}
	
	\maketitle
	
	
	\section{Introduction}

	\subsection{Context}
	
Lagrangian submanifolds in Calabi--Yau manifolds have been the focus of a great deal of attention in symplectic and Riemannian geometry.    Understanding how to ``decompose'' a (Hamiltonian isotopy class) of such Lagrangians in a canonical fashion, particularly using special Lagrangians, has commanded much of the research in the field and the Lagrangian mean curvature flow appeared as a natural candidate to achieve this goal \cites{Thomas,Thomas-Yau,Joyce}. The problem is also related to ideas which originated in the physics literature \cite{Douglas} and to algebraic geometry \cite{Bridgeland}.

A key aspect  of Lagrangian mean curvature flow, as in any geometric flow, is the formation of finite-time singularities, as these should relate to the intended decomposition of the initial Lagrangian.   A natural question is whether the velocity of the flow, i.e.~the mean curvature, stays bounded or not at such singularities.  For mean curvature flow of hypersurfaces in $\mathbb{R}^{n+1}$, the mean curvature must become unbounded for $n=2$ \cite{LiWang}, but can remain bounded for $n\geq 7$ \cite{Stolarski}: a central feature in the latter case is that the blow-up model is minimal (i.e.~has zero mean curvature).  

For graded Lagrangian mean curvature flow (i.e.~for the flow of Lagrangians endowed with a single-valued Lagrangian angle function), finite-time singularities are necessarily Type II \cite{Neves1} and so natural blow-up models are minimal (or even special) Lagrangians (where the Lagrangian angle is constant).  A primary example of such singularities are \emph{neck pinch singularities}, which are modelled on  Lawlor necks: these are special Lagrangian cylinders asymptotic to two transverse planes.  Neck pinch singularities  provide the basic mechanism by which a graded Lagrangian can decompose into a pair of Lagrangians. 
These particular singularities are central to Joyce's programme \cite{Joyce} for a Lagrangian mean curvature flow through singularities and with surgeries, in partial analogy to Ricci flow with surgeries (cf.~\cites{MorganTian,Perelman}), as they are expected to be a generic example of a finite-time singularity for the flow.  
	
	
	For the Lagrangian mean curvature flow of surfaces, where the ambient Calabi--Yau has real dimension 4, Székelyhidi \cite{Szeke} introduced the notion of \emph{nondegenerate} neck pinch singularities. 
	He showed they are stable under small perturbations of the initial Lagrangian, and any neck pinch singularity can be perturbed to be nondegenerate.  He also gave the first existence results of such singularities in the compact setting.
	
	While it is well known that the second fundamental form must explode at a neck pinch singularity, understanding whether the mean curvature remains bounded has until now remained an open problem. In fact, it has even been suggested that the mean curvature could remain bounded: see \cite[Chapter 7]{Wood}, and \cite{StSu} which deals with complex dimension at least $3$. As we shall see, this is not the case for nondegenerate neck pinch singularities.

	\subsection{Main result}
	
	In this article $X$ will denote a compact Calabi--Yau manifold of real dimension $4$, and we use the notion of nondegenerate neck pinch singularity introduced in \cite{Szeke}: see Section \ref{sec:Preliminaries}, and particularly Definition \ref{dfn:np}, for details. Our main result is the following.

	\begin{theorem}\label{thm:Main}
		Let $(L_t)_{t\in [0,T_0)}$ be a graded, rational, Lagrangian mean curvature flow in $X$ with uniformly bounded area ratios, whose first finite time singularity is a nondegenerate neck pinch  at $(x_0,T_0)\in X\times \mathbb{R}^+$. Then, for all $r<\tfrac{1}{2}$, the mean curvature $H_{L_t}$ satisfies
		\begin{equation}\label{eq:r.blowup}
			\lim_{t \nearrow T} (T_0-t)^r \sup_{L_t \cap B_{2 \sqrt{T_0-t}}(x_0)} |H_{L_t}| = +\infty.
		\end{equation}
	\end{theorem}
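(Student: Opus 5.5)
We argue by contradiction. Suppose that for some $r<\tfrac12$ the limit in \eqref{eq:r.blowup} fails, so along a sequence $t_i\nearrow T_0$ (and, after a short argument using the parabolic structure, on whole intervals of times) we have $\sup_{L_t\cap B_{2\sqrt{T_0-t}}(x_0)}|H_{L_t}|\le C(T_0-t)^{-r}$. The key structural input is Definition \ref{dfn:np}, as in \cite{Szeke}. For a nondegenerate neck pinch, at the parabolic scale $\sqrt{T_0-t}$ the flow is close to the union of two transverse planes $P_1\cup P_2$ with the same Lagrangian angle. At a much smaller scale $\lambda(t)\ll\sqrt{T_0-t}$ it is modelled on a Lawlor neck $N$ asymptotic to $P_1\cup P_2$. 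Nondegeneracy gives that the neck scale satisfies a definite rate, $\lambda(t)\sim c\,(T_0-t)^{\alpha}$, and I would use it to get $\lambda(t)^2\gtrsim (T_0-t)$ up to a controlled factor in the sense needed below.

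The main idea is to measure the \emph{area} (or Lagrangian angle oscillation) that the neck must sweep out. Since the flow is graded, write $H=J\nabla\theta$. In the annular region $A_t=L_t\cap\bigl(B_{2\sqrt{T_0-t}}\setminus B_{K\lambda(t)}\bigr)$ the surface is a graph over $P_1\cup P_2$. On the two sheets the angle $\theta$ must be close to the constant angles $\theta_1,\theta_2$ of the planes. The Lawlor neck is special Lagrangian, but its Hamiltonian/Liouville class is nontrivial. Its period $\int_{\gamma}\lambda_{\mathrm{Liou}}$ over the neck circle $\gamma$ is $\sim\lambda(t)^2$ times a fixed constant $A(N)$. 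Along LMCF the time derivative of this period equals the flux of $\theta$, namely $\frac{d}{dt}\int_\gamma \lambda_{\mathrm{Liou}} = -\int_\gamma d\theta$-type terms. More precisely, the variation of the symplectic area of a disc bounded by $\gamma$ is $\int_\gamma \theta$ differences, i.e.\ the change in the action is controlled by the difference of the Lagrangian angle between the two sheets, $\theta|_{\text{sheet }1}-\theta|_{\text{sheet }2}$, integrated along a path through the neck. Since the neck area must collapse from $\sim\lambda^2$ to $0$ on time scale $T_0-t$, we get $|\Delta\theta|\gtrsim \lambda(t)^2/(T_0-t)$. This is where nondegeneracy is used to make $\lambda^2/(T_0-t)$ bounded below; Székelyhidi's analysis gives the neck size $\lambda\sim\sqrt{T_0-t}$ up to the relevant constants. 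It follows that $\theta$ differs by at least a fixed amount $\delta>0$ between two points joined by a path in $L_t\cap B_{2\sqrt{T_0-t}}$ of length $\lesssim\sqrt{T_0-t}$.

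On the other hand, the assumed bound gives
\[
|\Delta\theta|\le \int_{\text{path}}|\nabla\theta|\le C(T_0-t)^{-r}\cdot C'\sqrt{T_0-t}=CC'(T_0-t)^{\frac12-r}\to0,
\]
since $r<\tfrac12$. This contradicts $|\Delta\theta|\ge\delta$. To make the path argument valid I need the flow to be connected and of bounded geometry at the scale $\sqrt{T_0-t}$ in the ball. That comes from the rationality and the bounded area ratios, via Neves' results \cite{Neves1} and the convergence in Definition \ref{dfn:np}. These allow choosing the path on the graphical region plus through the neck, with length comparable to $\sqrt{T_0-t}$.

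The main obstacle is the lower bound on the angle jump. I must show rigorously that the collapsing neck forces an angle difference between the sheets at scale $\sqrt{T_0-t}$, rather than just at scale $\lambda$. I would first try the action/flux computation: take a disc $D_t$ in $X$ bounding the neck circle, compute $\frac{d}{dt}\int_{D_t}\omega=\int_{\gamma_t}\langle H, \cdot\rangle$, and rewrite it via $H=J\nabla\theta$ as an integral of $d\theta$ along a transverse arc. Then I would use the nondegeneracy rate of $\lambda(t)$ from Definition \ref{dfn:np} to integrate in time. If only a weaker rate is available, the same argument yields $|\Delta\theta|\gtrsim (T_0-t)^{\epsilon}$ for all $\epsilon>0$, which still suffices for every $r<\tfrac12$. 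This is consistent with the non-sharp form of \eqref{eq:r.blowup}.
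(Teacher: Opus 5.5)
Your last step matches the paper: Lemma \ref{lem:Bridge} gives a path in $L_t\cap B_{2\sqrt{T_0-t}}(x_0)$ of length $\lesssim\sqrt{T_0-t}$ between the two sheets of $K_t$, built from connectedness, almost calibratedness, \cite{Neves1} and the area bounds. Section \ref{sec:Proof} then applies the fundamental theorem of calculus at each fixed time, so you need neither a contradiction nor intervals of times.

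\textbf{The gap is the lower bound on the angle jump.} This is the real content of the theorem, and your mechanism does not deliver it.
\begin{itemize}
\item \emph{There is no fixed $\delta>0$.} The tangent flow $P_1\cup P_2$ has the \emph{same} grading on both planes, and the Type I rescalings converge smoothly away from $0$. Hence $\theta(p_1(t))-\theta(p_2(t))\to 0$ for points on the two sheets of $K_t$; there are no distinct angles $\theta_1,\theta_2$.
\item \emph{$\lambda(t)\sim\sqrt{T_0-t}$ is impossible,} since a neck at the parabolic scale would show up in the tangent flow.
\item \emph{Definition \ref{dfn:np} says nothing about a neck scale.} It is a condition on the normalized limit of $\theta-\underline\theta_\tau$ plus a Hamiltonian isotopy condition.
\item \emph{The flux computation gives zero.} Since $H=J\nabla\theta$ with $\theta$ single-valued, for a disc $D_t$ bounded by a closed loop $\gamma_t\subset L_t$ moving with the flow, $\frac{d}{dt}\int_{D_t}\omega=\pm\int_{\gamma_t}d\theta=0$. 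So the neck-circle period is conserved (graded LMCF is a Hamiltonian deformation), and there is no collapsing action to integrate in time. Switching to an arc through the neck does not compute the variation of any enclosed area.
\item \emph{The fallback bound is not produced either.} Even granting $|\Delta\theta|\gtrsim\lambda^2/(T_0-t)$, this tends to $0$ at an uncontrolled rate because $\lambda=o(\sqrt{T_0-t})$. So nothing in your argument gives $(T_0-t)^{\epsilon}$.
\end{itemize}

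\textbf{The missing idea is Proposition \ref{lem:Grading},} which gets $|\theta(p_2(t))-\theta(p_1(t))|\gtrsim_s (T_0-t)^s$ for every $s>0$ from the spectral meaning of nondegeneracy.
\begin{enumerate}
\item The limit $\theta_i\to(-(8\pi)^{-1/2},(8\pi)^{-1/2})$ has zero Gaussian average on planes of equal Gaussian area. Together with the minimizing property \eqref{eq:min}, this shows that $\|\theta-\underline\theta_\tau\|_\tau$ cannot drop by more than a factor $e^{-q}$ over a unit time step (Lemma \ref{lem:slow.decay}).
\item Sz\'ekelyhidi's three-annulus lemma propagates this to all later unit steps (Lemma \ref{lemma:iterative.slow.decay}). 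The Gaussian deviation therefore decays no faster than $e^{-q\tau}$.
\item A compactness argument (Lemma \ref{lem:intermediate.grading}) turns this into a pointwise jump of at least $\eta\|\theta-\underline\theta_\tau\|_\tau$ between the sheets on $M_\tau\cap B_2\setminus B_1$. It uses $|\tilde\theta_i|$ as a subsolution of the drift heat equation to get $C^0$ and $L^p$ bounds, and rules out loss of Gaussian norm at $0$ and $\infty$. Monotonicity of $\|\tilde\theta_\infty-\underline{\tilde\theta}_{\infty,\tau'}\|^2$ then forces the limit to be constant on each plane.
\end{enumerate}
Converting $e^{-q\tau}$ back to $(T_0-t)^q$ and combining with your FTC step completes the proof.
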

	
	\begin{remark}
		In fact, the proof will show that for any $s>0$, there is $t_s<T_0$ such that
\begin{equation}\label{eq:s.blowup}
		\sup_{L_t \cap B_{2 \sqrt{T_0-t}}(x_0)} |H_{L_t}| \gtrsim_s (T_0-t)^{s-1/2},
		\end{equation}
		for any $t_s<t<T_0$.  The significance of the exponent $1/2$ is that this is the Type I blow-up rate (as one has for self-shrinkers).
	\end{remark}
	
	\begin{remark}
	Since the rescaled flow at the singularity converges to a Lawlor neck, which is minimal but not totally geodesic, and this occurs at a slower rate, we deduce that we can compare the mean curvature $H_{L_t}$ and the second fundamental form $A_{L_t}$:
	\begin{equation}		
		\lim_{t \nearrow T}\frac{\sup_{L_t \cap B_{2 \sqrt{T_0-t}}(x_0)} |H_{L_t}|}{\sup_{L_t \cap B_{2 \sqrt{T_0-t}}(x_0)} |A_{L_t}|} =0.
		\end{equation}
	\end{remark}

	\begin{remark}
	For the neck pinch singularities of Lagrangian mean curvature flow of surfaces in \cite{LO}, it may be possible to obtain more refined information. In that case it is interesting to compare our result with the fact  that the mean curvature can remain bounded (in fact, converging to zero) at  infinite-time  singularities arising there \cite{Lee-Tsai}.
	\end{remark}

	\subsection{Organization}

	Section \ref{sec:Preliminaries} contains some preliminaries, including the definition of nondegenerate neck pinch singularity (Definition \ref{dfn:np}).  At such a singularity the (unique) tangent flow is a union of transverse planes $P_1\cup P_2$. In Section \ref{sec:Bridge} we use a method from \cite{LSS2} to construct a so-called Type I bridge (Lemma \ref{lem:Bridge}): this is a path of  length $\lesssim \sqrt{T-t_0}$ in $L_t$ connecting points $p_1(t), p_2(t)$ which when rescaled converge to the different planes $P_1,P_2$ respectively. Then, Section \ref{sec:Grading} gives a quantitative lower bound on the difference $|\theta(p_1(t))-\theta(p_2(t))|$, where $\theta$ is the Lagrangian angle: see Proposition \ref{lem:Grading}. Finally, in Section \ref{sec:Proof} we combine these two estimates with the fundamental theorem of calculus to estimate the supremum of the mean curvature $H=J\nabla \theta$, which concludes the proof of the main result.

	\subsection*{Acknowledgements}  
	JDL thanks OIST (Okinawa Institute of
Science and Technology) for hospitality during part of this project.  
	GO is partially funded by Funda\c c\~ao para a Ci\^encia e Tecnologogia (FCT) and the PRR through projects UID/04459/2025 and UID/PRR/04459/2025.

	\subsection*{Tool and computational resource disclosure}
	
	This article does not contain AI generated text. Initially, the project aimed to prove the mean curvature became unbounded at finite time singularities of the particular flows arising in \cite{LO}. AI was used for mathematical discussions at an early stage and was helpful in finding a proof strategy. After the authors finished writing a first version of the proof, AI was used to help audit technical aspects.


	\section{Preliminaries}\label{sec:Preliminaries}
	
	Let $(L_t)_{t\in[0,T_0)}$ be a graded Lagrangian mean curvature flow in $X$ with Lagrangian angle $\theta$ developing a neck pinch singularity at $(x_0,T_0)$.  We also make the finite topology assumption that $L_t$ is rational (see e.g.~\cite[Definition 7]{Szeke}) and that  $L_t$ has uniformly bounded area ratios. 	Recall from \cite[Theorem~8.2]{LSS2} that the unique tangent flow of $L_t$ at $(x_0,T_0)$ is a union of Lagrangian planes $V=P_1\cup P_2$ with the same grading, i.e.~a special Lagrangian union. 
	
\subsection{Rescaled flows}	Fixing an open coordinate neighbourhood of $x_0$, we  denote the type I scaled flow
	\begin{equation}\label{eq:TypeY}
	\widehat{L}_t=(T_0-t)^{-1/2} (L_t-x_0),
	\end{equation}
	regarded as a flow in an open set in $\mathbb{C}^2$. However, to state the definition of nondegenerate neck pinch and for later purposes, it is convenient to regard the type I scaled flow in a slightly different manner. Fix an isometric embedding of the ambient Calabi--Yau $X$ in $\mathbb R^N$ and set 
	$\tau (t)=-\log (T_0-t)$. Then, we shall consider
	\begin{equation}\label{eq:Mtau}
	M_\tau:= e^{\tau/2} (L_{T_0-e^{-\tau}} - x_0),
	\end{equation}
 for $\tau\in[-\log T_0,+\infty)$	as a flow in $\mathbb{R}^N$.  (Essentially, $M_{\tau}$ is a time-reparamaterization of $\widehat{L}_t$, with $t\mapsto \tau(t)$.)   In this perspective, the grading evolves through the drift heat equation \cite[(37)]{Szeke}:
	\begin{equation}\label{eq:grading_flow}
		\partial_\tau \theta = \Delta_{M_\tau} \theta - \frac{1}{2} x \cdot \nabla_{M_\tau} \theta,
	\end{equation}
	where $\cdot$ denotes the standard inner product in $\mathbb{R}^N$, $x$ is the position vector in $\mathbb{R}^N$, and $\Delta_{M_\tau}$, $\nabla_{M_\tau}$ the Laplacian and gradient in $M_\tau$ using the induced metric. 
	
\subsection{Nondegenerate neck pinch}	For a real-valued function $u$ on $M_\tau$, we  define its Gaussian $L^2$-norm (where $\mathcal{H}^2$ denotes 2-dimensional Hausdorff measure) as
\begin{equation}\label{eq:weighted.norm}
\|u \|_\tau^2:= \int_{M_\tau} u^2 e^{-\frac{|x|^2}{4}} d \mathcal H^2,
\end{equation}
	and its Gaussian average by
\begin{equation}\label{eq:average}
	\underline{u}_\tau:= \frac{\int_{M_\tau} u \, e^{-\frac{|x|^2}{4}} d \mathcal H^2}{\int_{M_\tau} e^{-\frac{|x|^2}{4}}d \mathcal H^2} .
	\end{equation}
	To define the notion of nondegenerate neck pinch consider a sequence of times $\tau_i \to +\infty$ and 
\begin{equation}\label{eq:theta.i}	
	\theta_i := \frac{\theta-\underline \theta_{\tau_i}}{\| \theta - \underline \theta_{\tau_i} \|_{\tau_i}},
	\end{equation}
	seen as a function on $M_{\tau+\tau_i}$ for $\tau \in (0,2]$. As argued in the discussion preceding \cite[Definition~19]{Szeke}, as $i \nearrow +\infty$, $\theta_i$ converges to a function $\theta_\infty$ on $(0,2] \times (P_1\cup P_2)$. We can then write $\theta_\infty=(\theta_{\infty,1},\theta_{\infty,2})$ with the $\theta_{\infty,j}$ functions on $(0,2] \times P_j$. The following is \cite[Definition~19]{Szeke}.

	\begin{definition}\label{dfn:np} 
		A neck pinch singularity is called \emph{nondegenerate} if:
		\begin{itemize}
			\item $(\theta_{\infty,1},\theta_{\infty,2})=(-(8\pi)^{-1/2}, (8\pi)^{-1/2})$;
			\item in a sufficiently small Darboux chart around $x_0$ and for $t$ sufficiently close to $T_0$, $L_t$ is Hamiltonian isotopic to $P_1\#P_2$, with an isotopy
			that is close to the identity near the boundary of the chart.
		\end{itemize}
		As already mentioned, any neck pinch singularity can be perturbed to become nondegenerate near the singular time \cite{Szeke}.
	\end{definition}

	\begin{remark}\label{rem:Minimizing}
		For future reference we notice the following minimizing property
\begin{equation}\label{eq:min}
\| \theta - \underline \theta_{\tau_i} \|_{\tau_i} = \inf_{b \in \mathbb{R}} \| \theta - b \|_{\tau_i}. 
\end{equation}
	\end{remark}


	\section{Type I bridge}\label{sec:Bridge}
	
	Recall that, in our setting, the type I scaling 
$\widehat{L}_t$ in \eqref{eq:TypeY}	converges smoothly away from $0$ on compact subsets of $\mathbb{C}^2$ to $V=P_1\cup P_2$. In particular, for $t$ sufficiently close to $T_0$, its intersection with $B_2\backslash B_1$ has two connected components and these are graphs over the portions of $P_1$ and $P_2$ in the annulus. Scaled back down to the annular region
\begin{equation}\label{eq:Kt}
K_t=L_t \cap B_{2\sqrt{T_0-t}} (x_0) \backslash B_{\sqrt{T_0-t}} (x_0)=K_{t,1}\sqcup K_{t,2}
\end{equation} we will refer to the components $K_{t,1}$ and $K_{t,2}$ as the $P_1$ and $P_2$ components respectively.

We now show that the distance between any two points in the $P_1,P_2$ components is uniformly bounded in terms of $\sqrt{T_0-t}$, as there is a curve  $\gamma_t$ of at most that length connecting them.   As a result, we call $\gamma_t$ a \emph{Type I bridge}.
	
	\begin{lemma}\label{lem:Bridge}
		There is 
		$C>0$ such that, for all $t$ sufficiently near $T_0$ and for any points $p_1\in K_{t,1}$ and $p_2\in K_{t,2}$ 
		as in \eqref{eq:Kt}, 
		 there is a curve
		\begin{equation}\gamma_t \subset L_t \cap B_{2\sqrt{T_0-t}} (x_0)
		\end{equation}
connecting $p_1$ and $p_2$,		with
\begin{equation}		
		\mathrm{Length}(\gamma_t) \leq C \sqrt{T_0-t}.
		\end{equation}
	\end{lemma}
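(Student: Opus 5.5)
Everything rescales, so I will work in the Type I picture. The rescaled surfaces $\widehat{L}_t$ lie inside $B_2$, and the claim is equivalent to a uniform bound: any point of the rescaled $P_1$ component $\widehat K_{t,1}$ can be joined to any point of $\widehat K_{t,2}$ by a curve in $\widehat L_t\cap B_2$ of length at most $C$. The argument splits into three steps.

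\textbf{Step 1 (moving within a component).} Each annular piece $\widehat K_{t,j}$ is a small graph over $P_j\cap(B_2\setminus B_1)$, and this graph converges smoothly to $P_j$. Two points of the same component can therefore be joined inside that component by a curve of length at most, say, $10$.

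\textbf{Step 2 (existence of a connecting path).} It suffices to find one path in $\widehat L_t\cap B_2$, of length $\le C$, from some point of $\widehat K_{t,1}$ to some point of $\widehat K_{t,2}$. Existence is where the neck pinch structure enters. By Definition \ref{dfn:np}, $L_t$ is Hamiltonian isotopic to $P_1\#P_2$ in the Darboux chart, with an isotopy close to the identity near the boundary of the chart. So $L_t$ is connected inside the chart, and the two sheets are joined through the neck.

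Connectedness inside the chart is not enough, because the connection must happen inside $B_{2\sqrt{T_0-t}}$. I would follow the method of \cite{LSS2}. By uniqueness of the tangent flow, the rescaled flow converges smoothly on $\overline{B_R}\setminus B_{1/2}$ to $P_1\cup P_2$, and this holds uniformly over the rescaled time interval. If the two sheets were not connected inside $B_2$, they would be separate connected pieces of $\widehat L_t\cap B_2$. Applying the same reasoning at every scale between $\sqrt{T_0-t}$ and the chart size shows the sheets stay in distinct components at every dyadic annulus. They would then be disconnected in the whole chart, contradicting connectedness of $P_1\#P_2$.

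To run this scale by scale, I need the connection to sit at the scale $\sqrt{T_0-t}$. The clean way is a Lawlor neck blow-up at scale $\ll\sqrt{T_0-t}$. Alternatively, I would argue via \cite[Theorem~8.2]{LSS2}, which gives that the flow near $(x_0,T_0)$ at smaller scales converges to a Lawlor neck. The neck then supplies a curve joining the sheets at a smaller scale $\lambda(t)\ll\sqrt{T_0-t}$. That curve has length $\lesssim\lambda(t)$ and lies in $B_{\lambda(t)}$.

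\textbf{Step 3 (length control).} Outside the neck scale, $\widehat L_t\cap(B_2\setminus B_{C\lambda/\sqrt{T_0-t}})$ is a union of two graphs over annuli in $P_1$ and $P_2$. This is the intermediate-scale graphical region, which is exactly what \cite{LSS2} controls. Within this region, I travel radially inward along each sheet, with length at most $2$ per sheet. At the inner radius I enter the neck region, cross it with length $O(\lambda/\sqrt{T_0-t})$, and then travel radially outward on the other sheet. Concatenating this with Step 1 gives total length $\le C$. Scaling back down yields the bound $C\sqrt{T_0-t}$.

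\textbf{Main obstacle.} The hard part is the intermediate scales: between the Lawlor neck scale and the Type I scale, the surface must be graphical over $P_1\cup P_2$ with small gradient, so that no extra topology or long detours appear. I would get this from the uniform smooth convergence of rescalings at every intermediate scale, argued by contradiction with a blow-up sequence. Such a sequence would produce a tangent flow or blow-up limit other than the planes or the Lawlor neck, which is excluded by the uniqueness and classification results of \cite{LSS2}. Any failure of the graphical property would have to appear at some definite intermediate scale, and there it would contradict these limits.
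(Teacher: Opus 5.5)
Your Step 1 is fine, and the first half of Step 2 does show that the two sheets lie in the same component of $\widehat{L}_t\cap B_2$. This uses two facts together:
\begin{itemize}
\item graphicality of $L_t$ over $P_1\cup P_2$ on the annuli $B_{2\rho}(x_0)\setminus B_\rho(x_0)$ for $\sqrt{T_0-t}\le\rho\le\rho_0$, which can be justified from uniqueness of the tangent flow because these are parabolic rescalings about $(x_0,T_0)$ at rescaled times in $[-1,0)$;
\item connectedness of $P_1\#P_2$ from Definition \ref{dfn:np}.
\end{itemize}
That is a legitimate substitute for the paper's appeal to \cite[Section 6]{Neves1}.

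\textbf{The gap is the length bound.} Connectedness alone does not control the length of a connecting path, so you fall back on a multiscale picture. It needs a Lawlor neck at some scale $\lambda(t)\ll\sqrt{T_0-t}$ for \emph{every} $t$ near $T_0$. It also needs graphicality of $\widehat{L}_t$ over two sheets on all of $B_2\setminus B_{C\lambda(t)/\sqrt{T_0-t}}$. Neither is supplied by what you cite; the paper uses \cite[Theorem~8.2]{LSS2} only for uniqueness of the tangent flow.

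Your compactness argument for the intermediate region does not close. Consider a blow-up of $L_{t_i}$ at $x_0$ at a scale $\rho_i$ with $\lambda(t_i)\ll\rho_i\ll\sqrt{T_0-t_i}$. In the parabolic rescaling about $(x_0,T_0)$ it sits at rescaled times $-(T_0-t_i)/\rho_i^2\to-\infty$. So it is neither a tangent flow at $(x_0,T_0)$ nor the neck blow-up, and uniqueness of the tangent flow says nothing about it. To rule out non-graphical limits there you would need to classify the resulting eternal solutions and identify their blow-downs. That identification itself requires control across the very range of scales you are trying to control. As written, Step 3 rests on an unproved assertion that carries the entire difficulty of the lemma.

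\textbf{The missing idea is that no structure below the Type I scale is needed.} The paper argues as follows:
\begin{itemize}
\item For $t$ near $T_0$, $\widehat{L}_t\cap B_3$ is almost calibrated by \cite[Lemma 3.5 (2)]{LSS2}.
\item Hence, by \cite[Lemma 7.2]{Neves1}, intrinsic unit balls centred in $\widehat{L}_t\cap B_2$ have area bounded below by a uniform $C_1>0$.
\item The area-ratio hypothesis bounds the area of $\widehat{L}_t$ in $B_2$ (or $B_3$) above by a uniform $C_2$.
\item A maximal intrinsically $2$-separated set in the connected set $\widehat{L}_t\cap B_2$ therefore has at most $C_2/C_1$ points. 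The intrinsic $2$-balls around these points cover it, and connectedness lets you chain them.
\item This gives a uniform bound on intrinsic diameter, up to routine bookkeeping near $\partial B_2$, where the surface is graphical.
\end{itemize}
Rescaling gives $\mathrm{Length}(\gamma_t)\le C\sqrt{T_0-t}$ directly, with no neck or intermediate-scale analysis.
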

	\begin{proof}  The argument is essentially already given within the proof of \cite[Lemma 8.1]{LSS2}, but we provide the details.
	
	Consider the type I scaled flow $\widehat{L}_{t}$ in \eqref{eq:TypeY} (and recall the relation between the rescaled flow $M_{\tau}$ in \eqref{eq:Mtau} and $\widehat{L}_t$ so as to compare with the statements in \cite{LSS2}).  For  $t$  sufficiently near $T_0$ it follows from \cite[Lemma 3.5 (2)]{LSS2} that $\widehat{L}_{t}\cap B_3$ is \emph{almost calibrated}, i.e.~the variation of the Lagrangian angle is less than $\pi-\delta$ for some $\delta>0$.   Hence, we have by \cite[Lemma 7.2]{Neves1} a uniform lower bound for the area of intrinsic unit balls in $\widehat{L}_t$ centred at a point $x\in B_2\cap\widehat{L}_t$:
	\begin{equation}\label{eq:area.1}
	\mathcal{H}^2(\widehat{B}_{\widehat{L}_t}(x,1))>C_1.
	\end{equation}
	We also have a uniform upper bound for the area of $B_2\cap \widehat{L}_t$ for all $t$ sufficiently near $T_0$:
\begin{equation}\label{eq:area.2}
\mathcal{H}^2(B_2\cap\widehat{L}_t)<C_2,
\end{equation}	
	 since we assumed we had uniformly bounded area ratios for the original flow $L_t$.
	
	There are no compact almost calibrated Lagrangians in $\mathbb{C}^2$, so $\widehat{L}_{t}\cap B_2$ has either 1 or 2 connected components.  If $\widehat{L}_t\cap B_2$ has 2 connected components for some $t$ sufficiently close to $T_0$, then it follows from \cite[Section 6]{Neves1} that there is no singularity at $(x_0,T_0)$, which is a contradiction.  Hence, we must have that $\widehat{L}_t\cap B_2$ is connected for all $t$ sufficiently close to $T_0$.  
	Using this connectedness together with the area bounds \eqref{eq:area.1}--\eqref{eq:area.2} implies that there exists $C>0$ so that any point $p\in \widehat{L}_t\cap B_2$ can be connected to $x_0$ with a curve of length at most $C/2$.  
	
	Rescaling back to the original flow and choosing $p_1,p_2$ in the $P_1,P_2$ components $K_{t,1},K_{t,2}$ gives the claimed result.
	%
	%
%
	\end{proof}


	\section{Grading difference estimate}\label{sec:Grading}
	
	The goal of this section is to prove the following result, which gives a lower bound for  the variation in the Lagrangian angle $\theta$ between certain points on the $P_1,P_2$ components in terms of the remaining time to the singular time $T_0$.
	
	\begin{proposition}\label{lem:Grading}
For all $s>0$ there is $t_s<T_0$ such that for all $t \in (t_s , T_0)$ the following holds. There are points $p_1(t)$, $p_2(t)$  in the $P_1,P_2$ components respectively of 
$K_t$ in \eqref{eq:Kt}, such that
\begin{equation}\label{eq:theta.bound}
		 |\theta(p_2(t)) - \theta(p_1(t))| \gtrsim_s (T_0-t)^{s}.
		 \end{equation}
	\end{proposition}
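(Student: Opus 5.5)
Work in the rescaled flow $M_\tau$ of \eqref{eq:Mtau}, where $T_0-t=e^{-\tau}$. In these variables \eqref{eq:theta.bound} asks for points $p_1,p_2$ in the $P_1,P_2$ components of $M_\tau\cap(B_2\setminus B_1)$ with $|\theta(p_2)-\theta(p_1)|\gtrsim_s e^{-s\tau}$. The plan has two parts. First, show that the Gaussian norm $N(\tau):=\|\theta-\underline\theta_\tau\|_\tau$ decays at most like $e^{-s\tau}$. Second, use nondegeneracy (Definition \ref{dfn:np}) to convert a lower bound on $N(\tau)$ into a pointwise gap between the two components of the annulus.

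\textbf{Step 1 (no fast decay).} I would establish a frequency/log-convexity estimate for \eqref{eq:grading_flow}. The operator $\mathcal L=\Delta_{M_\tau}-\tfrac12 x\cdot\nabla$ is self-adjoint with respect to the Gaussian weight. On the limit $P_1\cup P_2$ its spectrum is explicit: the kernel consists of the locally constant functions $(c_1,c_2)$, and the next eigenvalue is $-1/2$, coming from linear functions.

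Since the neck pinch is nondegenerate, the normalized limit $\theta_\infty$ is the pure kernel element $(\mp(8\pi)^{-1/2})$. I would make this quantitative by a contradiction/compactness argument along the sequence $\theta_i$ of \eqref{eq:theta.i}. If for some $s>0$ we had $N(\tau_i+1)\le e^{-s}N(\tau_i)$ along a sequence $\tau_i\to\infty$, the limit $\theta_\infty$ would satisfy $\|\theta_\infty(1)\|\le e^{-s}\|\theta_\infty(0)\|$. But $\theta_\infty$ is time-independent and nonzero, so this is impossible.

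Here I need to control two error sources:
\begin{itemize}
\item the error terms in the evolution of $\theta-\underline\theta$, namely the variation of the average and the non-flatness of $M_\tau$;
\item the Gaussian tails, which should be handled using the uniform area bounds and the drift.
\end{itemize}
Minimality \eqref{eq:min} lets me pass from $\|\theta-\underline\theta_{\tau}\|$ to $\inf_b\|\theta-b\|$, which is monotone-friendly. Iterating the resulting one-step bound $N(\tau+1)\ge e^{-s}N(\tau)$ for $\tau\ge\tau_s$ gives $N(\tau)\gtrsim_s e^{-s\tau}$. This is the main obstacle, because $\theta_\infty$ is only obtained along subsequences and the convergence is only local. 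I would first try to show that every subsequential limit has the nondegenerate form, using uniqueness of the tangent flow together with the Hamiltonian isotopy class in Definition \ref{dfn:np}, as in \cite{Szeke}. Then I would argue by contradiction uniformly in $\tau$.

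\textbf{Step 2 (from norm to points).} Fix $\tau$ large and set $\tilde\theta=(\theta-\underline\theta_\tau)/N(\tau)$. By nondegeneracy and the compactness above, $\tilde\theta$ is $C^0$-close on the annulus $(B_2\setminus B_1)\cap M_\tau$ to $-(8\pi)^{-1/2}$ on the $P_1$ component and to $+(8\pi)^{-1/2}$ on the $P_2$ component. This uses smooth convergence away from $0$, together with local parabolic estimates for \eqref{eq:grading_flow} to upgrade $L^2$ convergence to $C^0$ on compact sets away from the origin. Choosing any $p_1$, $p_2$ in the two components then gives $|\tilde\theta(p_2)-\tilde\theta(p_1)|\ge (8\pi)^{-1/2}$ for $\tau\ge\tau_s$. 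Undoing the normalization,
\[
|\theta(p_2)-\theta(p_1)|\ge (8\pi)^{-1/2}N(\tau)\gtrsim_s e^{-s\tau}=(T_0-t)^s.
\]
Finally, scaling back to $L_t$ (under which $\theta$ is invariant) gives \eqref{eq:theta.bound}.
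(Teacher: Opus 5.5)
Your architecture matches the paper's: slow decay of $N(\tau)=\|\theta-\underline\theta_\tau\|_\tau$, then compactness to a locally constant limit on $P_1\cup P_2$ to get a gap across the annulus. However, the step you flag as the main obstacle is a genuine gap, and the remedy you suggest does not fill it.

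Definition \ref{dfn:np} only gives you \emph{one} sequence $\tau_i\to\infty$ along which $\theta_i\to(\mp(8\pi)^{-1/2})$. Your Step 1 takes an \emph{arbitrary} sequence on which $N(\tau_j+1)\le e^{-s}N(\tau_j)$, and your Step 2 needs $C^0$-closeness at \emph{every} large $\tau$. Both presuppose that every subsequential limit, along every sequence of times, is the time-independent kernel mode. Along a fast-decay sequence the limit could a priori be a higher eigenmode (e.g.\ linear, decaying like $e^{-\tau/2}$), so the argument is circular. Uniqueness of the tangent flow only controls $M_\tau$, not which mode of the drift Laplacian dominates $\theta-\underline\theta_\tau$. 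The Hamiltonian isotopy condition is purely topological. Neither rules this out.

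The ``frequency/log-convexity'' estimate you mention at the outset is the right instinct, but you never use it. The tool that does the job is the three annulus lemma \cite[Lemma~20]{Szeke}. The paper first uses the given sequence and \eqref{eq:min} to get $N(\tau_i+1)\ge e^{-q}N(\tau_i)$ for infinitely many $i$ (Lemma \ref{lem:slow.decay}). Then, since $2q\notin\mathbb{Z}$ for $q\in(0,1/2)$, it iterates the three annulus lemma to get $N(\tau_i+k+1)\ge e^{-q}N(\tau_i+k)$ for all $k$ (Lemma \ref{lemma:iterative.slow.decay}).

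Once slow decay holds on two consecutive unit intervals, you do not need to know in advance that the limit is the nondegenerate one. On the limit planes, $\partial_{\tau}\|\tilde\theta_\infty-\underline{\tilde\theta}_{\infty,\tau}\|^2=-2\int_V|\nabla\tilde\theta_\infty|^2e^{-|x|^2/4}$. So the norm being equal to $1$ at $\tau'=1$ and at least $1$ at $\tau'=2$ forces $\tilde\theta_\infty$ to be locally constant with $|\tilde\theta_{\infty,1}-\tilde\theta_{\infty,2}|=(2\pi)^{-1/2}$ (Lemma \ref{lem:constancy}). This yields the gap at rate $e^{-q\tau}$ with $q<s$.

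Passing these norm identities to the limit also requires genuine $L^2$ norm convergence, with no loss at the origin or at infinity. The paper obtains this from Gaussian $L^p$ and $C^0$ bounds for the subsolution $|\tilde\theta_i|$ via \cite[Proposition~18]{Szeke}. Your remark about Gaussian tails points in this direction but is not yet an argument.

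Finally, the convergence is only on $(0,2]$, so ``$\theta_\infty(0)$'' in your Step 1 should be replaced by the exact normalization at $\tau_i$ itself.
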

	\begin{proof}
		Let $s>0$. We shall prove the result using the rescaled flow $M_\tau$ in \eqref{eq:Mtau}. In this setting, the claim is equivalent to finding $\tau_s>0$ and $\tilde p_1(\tau)$, $\tilde p_2 (\tau)$, respectively in the $P_1$ and $P_2$ components of $M_\tau \cap B_2\backslash B_1$ (using the same notion as introduced at the start of Section \ref{sec:Bridge}), such that
		\begin{equation}\label{eq:variation inequality tau}
			|\theta(\tilde p_2(\tau)) - \theta(\tilde p_1(\tau))| \gtrsim_s e^{-s\tau}.
		\end{equation} 
Consider a sequence of times $\tau_i\to+\infty$ so that $\theta_i$ in \eqref{eq:theta.i} has a limit 
\begin{equation}\label{eq:theta.infty}
\theta_{\infty}=(-(8\pi)^{-1/2},(8\pi)^{-1/2})
\end{equation}
 on $V=P_1\cup P_2$ as $i\to\infty$, as in Definition \ref{dfn:np}.  

We begin with the following, which gives a first control on the rate of change of $\|\theta-\underline{\theta}_{\tau_i}\|_{\tau_i}$.

\begin{lemma}\label{lem:slow.decay}
For all $q>0$ there are infinitely many $\tau_i$ such that 
\begin{equation}\label{eq:slow.decay}
\|\theta-\underline{\theta}_{\tau_{i}+1}\|_{\tau_{i}+1}\geq e^{-q}\|\theta-\underline{\theta}_{\tau_i}\|_{\tau_i}.
\end{equation} 
\end{lemma}

\begin{proof}
Recalling \eqref{eq:weighted.norm} and \eqref{eq:average}, we estimate the ratio		
		\begin{align}
			\frac{\| \theta - \underline \theta_{\tau_{i}+1} \|_{\tau_{i}+1}}{\| \theta - \underline \theta_{\tau_i} \|_{\tau_i}} & = 
			\Big\Vert \frac{ \theta - \underline \theta_{\tau_i}}{\| \theta - \underline \theta_{\tau_i} \|_{\tau_i}}  
			- 		\frac{\underline \theta_{\tau_{i}+1} - \underline \theta_{\tau_i} }{\| \theta - \underline \theta_{\tau_i} \|_{\tau_i}} \Big\Vert_{\tau_{i}+1} \nonumber\\
			& = 
			\Big\Vert \theta_i
			- 
			\frac{\underline \theta_{\tau_{i}+1} - \underline \theta_{\tau_i} }{\| \theta - \underline \theta_{\tau_i} \|_{\tau_i}} \Big\Vert_{\tau_{i}+1} \nonumber\\
			& \geq \inf_{b \in \mathbb{R}} \| \theta_i -b \|_{\tau_{i}+1},\label{eq:min.ineq}
		\end{align}
		where in \eqref{eq:min.ineq} we used the minimizing property \eqref{eq:min} in Remark \ref{rem:Minimizing}. Furthermore, notice that as $i \to +\infty$ we have $M_{\tau_{i+1}} \to P_1 \cup P_2$ and $\theta_i\to\theta_\infty$ which takes opposite values in these (equal Gaussian area) planes as in \eqref{eq:theta.infty}. Hence, the normalization in \eqref{eq:theta.infty} and the inequality \eqref{eq:min.ineq} imply that
\begin{equation}		
		\liminf_{i \to +\infty} \frac{\| \theta - \underline \theta_{\tau_{i}+1} \|_{\tau_{i}+1}}{\| \theta - \underline \theta_{\tau_i} \|_{\tau_i}}  \geq 1. 
		\end{equation}
The estimate \eqref{eq:slow.decay} follows.
\end{proof}		
		

We may now propagate the decay estimate \eqref{eq:slow.decay} using a three annulus lemma for $\theta-\underline{\theta}_{\tau_i}$, taken from \cite{Szeke}.	
	
\begin{lemma}\label{lemma:iterative.slow.decay}
	Let $0<q<1/2$. Then, there are infinitely many $\tau_i>0$ such that for all $k \in \mathbb N$
	\begin{equation}\label{eq:Iterative}
		\| \theta - \underline \theta_{\tau_{i}+k+1} \|_{\tau_{i}+k+1} \geq e^{-q} \| \theta - \underline \theta_{\tau_{i}+k} \|_{\tau_{i}+k} \geq \ldots \geq e^{-q(k+1)} \| \theta - \underline \theta_{\tau_i} \|_{\tau_i} . 
	\end{equation}
\end{lemma}
\begin{proof}
		Given any $0<q<1/2$ we can apply Lemma \ref{lem:slow.decay} to deduce the existence of infinitely many $\tau_i$ such that \eqref{eq:slow.decay} holds.  Since $ -2q \in \mathbb R \backslash \mathbb Z$, the conditions of \cite[Lemma~20]{Szeke} are met, which can then be applied iteratively $k$ times to deduce \eqref{eq:Iterative}.
\end{proof}

The next result is the key to the proof of Proposition \ref{lem:Grading}. It roughly says that given sufficient growth of the deviation of $\theta$ from its average, we can obtain a definite amount of variation of the Lagrangian angle as we pass between the $P_1$ and $P_2$ components of the rescaled flow. 

\begin{lemma}\label{lem:intermediate.grading}
	Let $0<\eta< \tfrac{1}{\sqrt{8\pi}}$. There are $q_0>0$ and $\tau_\ast$ such that for all $\tilde \tau>\tau_\ast$ and $0<q<q_0$, if
	\begin{equation}\label{eq:slow_decay_2_steps}
	\begin{aligned}
		\| \theta - \underline \theta_{\tilde \tau+2} \|_{\tilde \tau+2} & \geq e^{-q} \| \theta - \underline \theta_{\tilde \tau+1} \|_{\tilde \tau+1}, \\
		\| \theta - \underline \theta_{\tilde \tau+1} \|_{\tilde \tau+1} & \geq e^{-q} \| \theta - \underline \theta_{\tilde \tau} \|_{\tilde \tau},
	\end{aligned}
	\end{equation}
	then for any $\tau' \in[1,2]$ there are $\tilde p_1 (\tilde \tau+\tau')$ and $\tilde p_2(\tilde \tau+\tau')$ on the $P_1$ and $P_2$ components of $M_{\tilde \tau+\tau'} \cap B_2\backslash B_1$ respectively  such that 
	\begin{equation}\label{eq:separation_proof}
		|\theta(\tilde p_1(\tilde \tau+\tau')) - \theta(\tilde p_2(\tilde \tau +\tau'))| \geq \eta \| \theta - \underline \theta_{\tilde \tau+1} \|_{\tilde \tau+1}.
	\end{equation}
\end{lemma}
\begin{proof}
	We argue by contradiction.  To this end, suppose that for all $q_0>0$ and $\tau_\ast$, there are $0<q<q_0$ and $\tilde \tau >\tau_\ast$ such that \eqref{eq:slow_decay_2_steps} holds but \eqref{eq:separation_proof} fails. This implies that there is a sequence $q_i \searrow 0$ and $\tilde \tau_i \nearrow +\infty $ such that \eqref{eq:slow_decay_2_steps} holds with $q_i$ instead of $q$, but there exists $\tau_i'\in [1,2]$ such that
	\begin{equation}\label{eq:separation_negation}
		|\theta(\tilde p_1) - \theta(\tilde p_2)| < \eta \| \theta - \underline \theta_{\tilde\tau_i+1} \|_{\tilde\tau_i+1}
	\end{equation}
	holds for all $\tilde{p}_1,\tilde{p}_2$ on the $P_1,P_2$ components of $M_{\tilde{\tau}_i+\tau_i'}\cap B_2\setminus B_1$.
	
	At this point it is convenient to consider 
\begin{equation}	\label{eq:tilde.theta.i}
	\tilde \theta_i (x,\tau'):= \frac{\theta (x,\tilde \tau_i+\tau') - \underline \theta_{\tilde\tau_i}}{\| \theta - \underline \theta_{\tilde\tau_i+1} \|_{\tilde\tau_i+1}} ,
	\end{equation}
	defined for $\tau' \in [0,2]$, and its average
\begin{equation}	\label{eq:tilde.average}
	\underline{\tilde \theta}_{i,\tau'} = \frac{\int_{M_{\tilde\tau_i+\tau'}} \tilde \theta_i(x,\tau') e^{-|x|^2/4} d \mathcal H^2 }{\int_{M_{\tilde\tau_i+\tau'}}e^{-|x|^2/4} d \mathcal H^2}=\frac{\underline{\theta}_{\tilde{\tau}_i+\tau'}-\underline{\theta}_{\tilde{\tau}_i}}{\| \theta - \underline \theta_{\tilde\tau_i+1} \|_{\tilde\tau_i+1}},
	\end{equation}
in a similar way to \eqref{eq:average}.  
	Then, \eqref{eq:separation_negation} becomes
	\begin{equation}\label{eq:separation_negation_2}
		|\tilde \theta_i(\tilde p_1, \tau_i') - \tilde \theta_i(\tilde p_2,\tau_i')| < \eta
	\end{equation}
	 for all $\tilde{p}_1,\tilde{p}_2$ on the $P_1,P_2$ components of $M_{\tilde{\tau}_i+\tau_i'}\cap B_2\setminus B_1$.  Our goal is to derive a contradiction to \eqref{eq:separation_negation_2} by sending $i\to\infty$ and analysing limiting quantities on the planes $V=P_1\cup P_2$.

	To obtain this limit as $i\to\infty$, we first note the following basic but key estimates.
	 
	 \begin{lemma}\label{lem:estimates_normalized}
	 The quantities $\tilde{\theta}_i$ in \eqref{eq:tilde.theta.i} and $\underline{\tilde{\theta}}_{i,\tau'}$ in \eqref{eq:tilde.average} satisfy
	 	\begin{equation}\label{eq:estimates_normalized}
		\begin{aligned}
			\| \tilde \theta_i (\cdot , 0) \|_{\tilde\tau_i} & \leq e^{q_i}, \\
			\| \tilde \theta_i - \underline{\tilde \theta}_{i,1} \|_{\tilde\tau_i+1} & = 1, \\
			\| \tilde \theta_i - \underline{\tilde \theta}_{i,2} \|_{\tilde\tau_i+2} & \geq e^{-q_i}.
		\end{aligned}
	\end{equation}
	 \end{lemma}
	
	\begin{proof} This follows from a short computation using \eqref{eq:slow_decay_2_steps} (with $q_i$ in place of $q$) together with \eqref{eq:tilde.theta.i}--\eqref{eq:tilde.average}.
	\end{proof}

	We now observe, using Kato's inequality and the fact that $\theta$ satisfies \eqref{eq:grading_flow}, that $|\tilde \theta_i|$ is a subsolution to the drift heat equation \eqref{eq:grading_flow} in $\tau' \in [0,2]$.  This enables us to deduce the following pointwise bounds for $\tilde{\theta}_i$ given the integral bounds in \eqref{eq:estimates_normalized}.
	
\begin{lemma}\label{lem:delta.bounds}
For all $\delta\in (0,1)$ there exist constants $p>1$ and $C>0$, depending on $\delta$, so that for all $i$ and all $\tau'\in [\delta,2]$ we have
\begin{equation}\label{eq:delta.bound.0}
|\tilde{\theta}_i(x,\tau')|^2\leq Ce^{\frac{|x|^2}{4p}}.
\end{equation}	
\end{lemma}

\begin{proof}
Given that $|\tilde{\theta}_i|$ is a subsolution of the drift heat equation we may apply \cite[Proposition 18(b)]{Szeke} to $|\tilde\theta_i|$, using \eqref{eq:estimates_normalized}.  Hence, for all $0<\delta<1$ we obtain $p>1$, such that for $\tau' \in [\delta,1]$ we have a $C^0$ estimate of the form
 \begin{equation}\label{eq:delta.bound}
 |\tilde \theta_i (x,\tau')|^2 \lesssim_\delta e^{2q_i} e^{\frac{|x|^2}{4p}} \lesssim e^{\frac{|x|^2}{4p}},
 \end{equation}
 where we used that $q_i\searrow 0$. 
 
	We now show that a similar estimate to \eqref{eq:delta.bound} holds for $\tau' \in [1,2]$. Indeed, if we let $s \in [0,1]$ and $p(s)=1+e^s$, then \cite[Proposition 18(a)]{Szeke} gives
	\begin{equation}\label{eq:Prop.18a.Szeke}
		\left( \int_{M_{\tilde \tau_i +s}} |\tilde \theta_i (x, s)|^{p(s)} e^{-\frac{|x|^2}{4}} d \mathcal{H}^2 \right)^{\frac{1}{p(s)}} \lesssim \|\tilde \theta_i (\cdot , 0 ) \|_{\tilde \tau_i} \lesssim 1.
	\end{equation}
	As $p(s)>2$, we can combine \eqref{eq:Prop.18a.Szeke} with  H\"older's inequality and the uniform Gaussian area bound to obtain
\begin{equation}	
	 \|\tilde \theta_i (\cdot , s ) \|_{\tilde \tau_i+s} \lesssim 1.
	 \end{equation}
	Then, given any $\tau' \in [1,2]$ we can again apply \cite[Proposition 18(b)]{Szeke} to the interval $[\tau'-1,\tau']$ instead of $[0,1]$ (with $\delta=1/2$ say) which gives
\begin{equation}\label{eq:delta.bound.2}
	|\tilde \theta_i (x,\tau')|^2 \lesssim  e^{\frac{|x|^2}{4p}}, 
\end{equation}
	for some $p>1$. This estimate is independent of $\tau'$ so it holds uniformly for all $\tau' \in [1,2]$. Furthermore, the bound \eqref{eq:delta.bound.2} is also uniform in $i$ and so the same is true for \eqref{eq:delta.bound.2}.  
	
	Combining \eqref{eq:delta.bound} and \eqref{eq:delta.bound.2} yields the result.
	\end{proof}
	
Recall that, as $\tau \to +\infty$, $M_\tau$ smoothly converges to $V=P_1\cup P_2$ away from the origin, and the drift heat equation is uniformly parabolic.   We deduce from Lemma \ref{lem:delta.bounds} that, as $i\to\infty$, $\tilde \theta_i$ converges to $\tilde \theta_\infty=(\tilde \theta_{\infty,1},\tilde \theta_{\infty,2})$ smoothly on compact subsets of $(0,2] \times (V \backslash \{0\})$. Furthermore, 
$\tilde{\theta}_{\infty}$ satisfies the drift heat equation \eqref{eq:grading_flow}	on $(0,2] \times (V \backslash \{0\})$.  We have thus obtained the desired limiting quantity $\tilde{\theta}_{\infty}$ on $V$ and we may write
	\begin{equation}
	\|\tilde{\theta}_{\infty}\|^2_{\tau'}=\int_V|\tilde{\theta}(x,\tau')|^2e^{-\frac{|x|^2}{4}} d\mathcal{H}^2
	\end{equation}
	and let $\underline{\tilde{\theta}}_{\infty,\tau'}$ denote the average of $\tilde{\theta}_{\infty}$ at time $\tau'$ in an analogous way to \eqref{eq:tilde.average}.
	
	To obtain our contradiction to \eqref{eq:separation_negation_2} we need to demonstrate that we have suitable norm convergence of $\tilde{\theta}_i$ to $\tilde{\theta}_{\infty}$, so that we can send $i\to\infty$ in the estimates in Lemma \ref{lem:estimates_normalized}.  We first show that we have convergence of the averages.

	\begin{lemma}\label{lem:average.convergence} For all $\tau'\in (0,2]$, we have 
	\begin{equation}\label{eq:average.convergence}
		\lim_{i\to\infty}\underline{\tilde{\theta}}_{i,\tau'}=\underline{\tilde{\theta}}_{\infty,\tau'}.
	\end{equation} 
\end{lemma}

\begin{proof}
We first notice by the convergence of $M_\tau$ to $V$ as $\tau\to\infty$ that
\begin{equation}
\lim_{\tau\to\infty}\int_{M_{\tau}}e^{-\frac{|x|^2}{4}}d\mathcal{H}^2=\int_Ve^{-\frac{|x|^2}{4}}d\mathcal{H}^2=8\pi.
\end{equation}
Hence the denominators in the definition of the averages converge, so we need only consider the numerators.

Given any $R>r>0$ we denote the closed annulus with radii $r,R$ by
	\begin{equation}\label{eq:annulus}
	A_{r,R}=\overline{B}_R(0)\setminus B_{r}(0).
	\end{equation}
	Smooth convergence on compact subsets away from the origin  of  $\tilde{\theta}_i$ to $\tilde{\theta}_{\infty}$ as $i\to\infty$ gives
	\begin{equation}\label{eq:convergence.annulus.average}
	\lim_{i\to\infty}\int_{M_{\tilde{\tau}_i+\tau'}\cap A_{r,R}} \tilde\theta_i(x,\tau') e^{-\frac{|x|^2}{4}}d\mathcal{H}^2=\int_{V\cap A_{r,R}} \tilde{\theta}_{\infty}(x,\tau') e^{-\frac{|x|^2}{4}}d\mathcal{H}^2.
	\end{equation}
We therefore need only show that there is no loss at the origin or at infinity as $i\to\infty$ to obtain \eqref{eq:average.convergence}.

Near the origin, say in $M_{\tilde{\tau}_i+\tau'}\cap B_{r}(0)$, we have from the $C^0$ estimate \eqref{eq:delta.bound.0} that $|\tilde \theta_i (x,\tau')| \lesssim  1$, and thus (uniformly in $i$)
\begin{equation}	
	\int_{M_{\tilde{\tau}_i+\tau'}\cap B_{r}(0)} |\tilde\theta_i| e^{-\frac{|x|^2}{4}}d\mathcal{H}^2 \lesssim \mathcal{H}^2 ( M_{\tilde{\tau}_i+\tau'}\cap B_{r}(0) ) \lesssim r^2, 
\end{equation}
	by the uniform area ratio bound.  Hence,
\begin{equation}	\label{eq:no.loss.zero.average}
	\lim_{r \to 0} \sup_i \int_{M_{\tilde{\tau}_i+\tau'}\cap B_{r}(0)} |\tilde\theta_i| e^{-\frac{|x|^2}{4}}d\mathcal{H}^2 =0.
	\end{equation}

To study what happens at infinity, if we let $p= 1+ e^{\tau'}>2$, since $|\tilde{\theta}_i|$ is a subsolution of the drift heat equation, we may combine  \cite[Proposition 18(b)]{Szeke} and \eqref{eq:estimates_normalized} to obtain
\begin{equation}	\label{eq:p.tilde.bound.infinity}
	\left( \int_{M_{\tilde \tau_i +\tau'}} |\tilde \theta_i (x, s)|^{p} e^{-|x|^2/4} d \mathcal{H}^2 \right)^{\frac{1}{p}} \lesssim \|\tilde \theta_i (\cdot , 0) \|_{\tilde \tau_i} \lesssim 1.
	\end{equation}
	As $p>2$ we can apply H\"older's inequality to obtain
	\begin{align}
		\int_{M_{\tilde{\tau}_i+\tau'} \backslash B_R(0)} |\tilde\theta_i| e^{-\frac{|x|^2}{4}}d\mathcal{H}^2 & \lesssim \left( \int_{M_{\tilde{\tau}_i+\tau'}\backslash B_R(0)} |\tilde\theta_i|^{p} e^{-\frac{|x|^2}{4}}d\mathcal{H}^2 \right)^{\frac{1}{p}}  \left(  \int_{M_{\tilde{\tau}_i+\tau'} \backslash B_R(0)} e^{-\frac{|x|^2}{4}}d\mathcal{H}^2  \right)^{1-\frac{1}{p}}\nonumber\\ 
		&\lesssim e^{-\frac{R^2}{4}(1-\frac{1}{p})},\label{eq:no.loss.infinity.average.0}
	\end{align}
	by using the uniform area bound and \eqref{eq:p.tilde.bound.infinity}.  Since the estimate \eqref{eq:no.loss.infinity.average.0} is uniform in $i$, we have that
	\begin{equation}\label{eq:no.loss.infinity.average}
	\lim_{R\to\infty}\sup_i\int_{M_{\tilde{\tau}_i+\tau'}\setminus B_R(0)} |\tilde{\theta}_i|e^{-\frac{|x|^2}{4}}d\mathcal{H}^2=0.
\end{equation}	 Combining \eqref{eq:convergence.annulus.average}, \eqref{eq:no.loss.zero.average} and \eqref{eq:no.loss.infinity.average} yields the result.
	\end{proof}

Given the convergence of averages, we now show that we have convergence of $L^2$ norms.
	
\begin{lemma}\label{lem:norm.convergence}
For all $\tau'\in(0,2]$ we have
\begin{equation}\label{eq:norm.convergence.1}
\lim_{i\to\infty} \| \tilde \theta_{i}-\underline{\tilde{\theta}}_{i,\tau'} \|_{\tilde \tau_i+\tau'} = \| \tilde{\theta}_{\infty}-\underline{\tilde{\theta}}_{\infty,\tau'} \|_{\tau'}.
\end{equation}
\end{lemma} 
	
	\begin{proof}
	Smooth convergence on compact subsets away from the origin  of  $\tilde{\theta}_i$ to $\tilde{\theta}_{\infty}$ as $i\to\infty$ and convergence of the averages by Lemma \ref{lem:average.convergence} gives convergence on any annulus as in \eqref{eq:annulus}: 
	\begin{equation}\label{eq:convergence.annulus}
	\lim_{i\to\infty}\int_{M_{\tilde{\tau}_i+\tau'}\cap A_{r,R}} |\tilde\theta_i(x,\tau')-\underline{\tilde{\theta}}_{i,\tau'}|^2 e^{-\frac{|x|^2}{4}}d\mathcal{H}^2=\int_{V\cap A_{r,R}} |\tilde{\theta}_{\infty}(x,\tau')-\underline{\tilde{\theta}}_{\infty,\tau'}|^2 e^{-\frac{|x|^2}{4}}d\mathcal{H}^2.
	\end{equation}
To obtain \eqref{eq:norm.convergence.1}	we therefore must (as before) exclude norm being lost at the origin and at infinity.

At the origin, we again work in $M_{\tilde{\tau}_i+\tau'}\cap B_{r}(0)$. From the $C^0$ estimate \eqref{eq:delta.bound.0} we have that $|\tilde \theta_i (x,\tau')| \lesssim  1$  (uniformly in $i$) and hence, by \eqref{eq:average.convergence}, we deduce that
$|\tilde{\theta}_i(x,\tau')-\underline{\tilde{\theta}}_{i,\tau'}|\lesssim 1$.  Therefore, we can argue using the uniform area ratio bounds that we have
\begin{equation}	\label{eq:no.loss.zero}
	\lim_{r \to 0} \sup_i \int_{M_{\tilde{\tau}_i+\tau'}\cap B_{r}(0)} |\tilde\theta_i-\underline{\tilde{\theta}}_{i,\tau;}|^2 e^{-\frac{|x|^2}{4}}d\mathcal{H}^2 =0.
	\end{equation}

	As for estimating what happens at infinity, we start again with the $C^0$ estimate \eqref{eq:delta.bound.0}, which gives $|\tilde \theta_i (x,\tau')|^2 \lesssim  e^{|x|^2/(4p)}$ for some $p>1$ and thus
	\begin{align}
		\int_{M_{\tilde{\tau}_i+\tau'}\setminus B_R(0) } |\tilde\theta_i|^2 e^{-\frac{|x|^2}{4}}d\mathcal{H}^2 & = \sum_{m=0}^\infty \int_{M_{\tilde{\tau}_i+\tau'}\cap A_{R+m,R+m+1} } |\tilde\theta_i|^2 e^{-\frac{|x|^2}{4}}d\mathcal{H}^2 \nonumber\\
		& \lesssim \sum_{m=0}^\infty  \mathcal{H}^2 \left( M_{\tilde{\tau}_i+\tau'}\cap A_{R+m,R+m+1}  \right)  \sup_{x \in A_{R+m,R+m+1}} e^{- (\frac{1}{4} -\frac{1}{4p} ) |x|^2 } \nonumber\\
		& \lesssim \sum_{m=0}^\infty  (R+m+1)^2  e^{- (\frac{1}{4} -\frac{1}{4p} ) |m+R|^2 },\label{eq:no.loss.infinity}
	\end{align}
	where in \eqref{eq:no.loss.infinity} we used again the uniform control on the area ratio bound. Now, the estimate on the right hand side of \eqref{eq:no.loss.infinity} is uniformly bounded in $i$ and converges to zero as $R \to +\infty$.  Using Lemma \ref{lem:average.convergence}, uniform area ratio bounds and \eqref{eq:no.loss.infinity}, we deduce that
\begin{align}
\sup_i \int_{M_{\tilde{\tau}_i+\tau'}\setminus B_R(0) } &|\tilde\theta_i-\underline{\tilde{\theta}}_{i,\tau'}|^2  e^{-\frac{|x|^2}{4}}d\mathcal{H}^2\nonumber\\
&\leq \sup_i \int_{M_{\tilde{\tau}_i+\tau'}\setminus B_R(0) } 2|\tilde\theta_i|^2 e^{-\frac{|x|^2}{4}}d\mathcal{H}^2
+\sup_i |\underline{\tilde{\theta}}_{i,\tau'}|^2\int_{M_{\tilde{\tau}_i+\tau'}\setminus B_R(0) } 2e^{-\frac{|x|^2}{4}}d\mathcal{H}^2\to 0
\label{eq:no.loss.infinity.2}
\end{align}	
as $R\to\infty$.

	 Combining \eqref{eq:no.loss.infinity.2} with \eqref{eq:convergence.annulus} and \eqref{eq:no.loss.zero} completes the proof.
	\end{proof}

Lemma \ref{lem:norm.convergence} allows us to send $i\to\infty$ in the last two estimates in \eqref{eq:estimates_normalized} as desired, which gives
	\begin{equation}\label{eq:difference_from_average_normalized}
		\begin{aligned}
			\| \tilde \theta_\infty - \underline{\tilde \theta}_{\infty,1} \|_{1} & = 1 \\
			\| \tilde \theta_\infty - \underline{\tilde \theta}_{\infty,2} \|_{2} & \geq 1,
		\end{aligned}
	\end{equation}
	recalling that $q_i\to 0$ as $i\to\infty$.  (Note here that the subscripts $1$ and $2$ refer to  $\tau'=1$ and $2$.)  
	
We now use the fact that $\tilde{\theta}_{\infty}$ satisfies the drift heat equation to derive the following.

\begin{lemma}\label{lem:constancy}
We have that $\tilde{\theta}_{\infty}(x,\tau')$ is constant in $x\in V$ for all $\tau'\in[1,2]$ and  $\tilde{\theta}_{\infty}(x,\tau')-\underline{\tilde\theta}_{\infty,\tau'}$ is constant in $x\in V=P_1\cup P_2$ and $\tau'\in [1,2]$.
\end{lemma}	

\begin{proof}
We regard both $\tilde \theta_\infty$ and $\underline{\tilde \theta}_{\infty,\tau'}$ as functions of $\tau'$ and compute
	\begin{align}
		\partial_{\tau'} \| \tilde \theta_\infty - \underline{\tilde \theta}_{\infty,\tau'}\|^2_{V} & = 2\int_{V}  (\tilde \theta_\infty -\underline{\tilde \theta}_{\infty,\tau'}) \partial_{\tau'} (\tilde \theta_\infty - \underline{\tilde \theta}_{\infty,\tau'}) e^{-\frac{|x|^2}{4}} d \mathcal H^2 \nonumber\\
		& = 2\int_{V}  (\tilde \theta_\infty - \underline{\tilde \theta}_{\infty,\tau'}) \partial_{\tau'} \tilde \theta_\infty  e^{-\frac{|x|^2}{4}} d \mathcal H^2 
		-
		2 (\partial_{\tau'} \underline{\tilde \theta}_{\infty,\tau'}) \int_{V}  (\tilde \theta_\infty - \underline{\tilde \theta}_{\infty,\tau'})  e^{-\frac{|x|^2}{4}} d \mathcal H^2 \nonumber\\
		& = 2\int_{V}  (\tilde \theta_\infty - \underline{\tilde \theta_\infty}_{\tau'}) \partial_{\tau'} \tilde \theta_\infty  e^{-\frac{|x|^2}{4}} d \mathcal H^2 ,
	\end{align}
	where we have used that $ \underline{\tilde \theta}_{\infty,\tau'}$ is constant in space and that $\tilde \theta_\infty - \underline{\tilde \theta}_{\infty,\tau'}$ has zero Gaussian average by definition of $\underline{\tilde\theta}_{\infty,\tau'}$. We may then use the drift heat equation \eqref{eq:grading_flow} and integration by parts to obtain
	\begin{equation}\label{eq:evolution}
		\partial_{\tau'} \| \tilde \theta_\infty - \underline{\tilde \theta}_{\infty,\tau'} \|_{V}^2  = -2\int_{V} \nabla (\tilde \theta_\infty - \underline{\tilde \theta}_{\infty,\tau'} ) \cdot \nabla \tilde \theta_\infty  e^{-\frac{|x|^2}{4}} d \mathcal H^2  = -2 \int_{V} |\nabla \tilde \theta_\infty|^2 e^{-\frac{|x|^2}{4}} d \mathcal H^2,
	\end{equation}
	which is non-positive. Combining with \eqref{eq:difference_from_average_normalized} implies that $\tilde \theta_\infty - \underline{\tilde \theta}_{\infty,\tau'}$ is constant in $\tau'\in [1,2]$.  It then follows from \eqref{eq:evolution} that $\nabla\tilde{\theta}_{\infty}=0$ for $\tau'\in[1,2]$, which yields the claimed result.
\end{proof}
	
\textit{We now complete the proof of Lemma \ref{lem:intermediate.grading}.}	
	
Since $P_1$ and $P_2$ both have Gaussian area $4\pi$, by Lemma \ref{lem:constancy} we see that on $V=P_1\cup P_2$ we have
\begin{equation}
\tilde{\theta}_{\infty}-\underline{\tilde{\theta}}_{\infty,\tau'}=\frac{1}{2}(\tilde{\theta}_{\infty,1}(\tau')-\tilde{\theta}_{\infty,2}(\tau'),\tilde{\theta}_{\infty,2}(\tau')-\tilde{\theta}_{\infty,1}(\tau'))
\end{equation}
where the subscripts on the right-hand side denote the values on the planes $P_1$ and $P_2$. Hence, by \eqref{eq:difference_from_average_normalized}, we have that
\begin{equation}
2\pi (\tilde{\theta}_{\infty,1}-\tilde{\theta}_{\infty,2})^2=1
\end{equation}
for all $\tau'\in[1,2]$. Hence, for all $\tau'\in[1,2]$, $\tilde{p}_1\in P_1$ and $\tilde{p}_2\in P_2$ we have
\begin{equation}
|\tilde \theta_{\infty}(\tilde{p}_1,\tau') - \tilde \theta_{\infty}(\tilde{p}_2,\tau')| = \frac{1}{\sqrt{2\pi}}>2\eta,
\end{equation}
which then 
	contradicts \eqref{eq:separation_negation_2} as desired. 
\end{proof}

\emph{We now finalize the proof of Proposition \ref{lem:Grading}.}

	Let $s>0$ and $0<\eta<1/\sqrt{8\pi}$.  Let $q_0>0$ and $\tau_*$ be given by Lemma \ref{lem:intermediate.grading} and  choose $0<q<\min \{ s,q_0,1/2\}$. By Lemma \ref{lemma:iterative.slow.decay}, since $0<q<1/2$ we may choose $\tau_i>\tau_*$  so that \eqref{eq:Iterative} holds.  Note that any $\tau\geq\tau_i+1$ can be written as 
	\begin{equation}\label{eq:tau.formula}
	\tau = \tau_i +k+\tau' 
	\end{equation}
	for some $k\in\mathbb{N}$ and $\tau'\in [1,2]$.  
	 
By Lemma \ref{lemma:iterative.slow.decay}, for all $k \in \mathbb{N}$ we have that \eqref{eq:slow_decay_2_steps} in  Lemma \ref{lem:intermediate.grading} holds with $q$ as chosen (since $0<q<q_0$) and $\tilde \tau=\tau_i+k$.    
	 Then, it follows from applying Lemma \ref{lem:intermediate.grading} with $\tilde \tau=\tau_i+k$ and \eqref{eq:tau.formula} that, for all $\tau\geq \tau_i+1$, there are $\tilde p_1 (\tau)$ and $\tilde p_2 (\tau)$ on the $P_1$ and $P_2$ components of $M_\tau \cap B_2\backslash B_1$ respectively such that
	\begin{equation}
	| \theta(\tilde p_1(\tau))-\tilde \theta (\tilde p_2(\tau))| \geq \eta \| \theta - \underline \theta_{\tau_i+k+1} \|_{\tau_i+k+1}.
	\end{equation}
	Furthermore, using Lemma \ref{lemma:iterative.slow.decay}  and noting that $k+1=\tau+1-\tau_i -\tau'$ by \eqref{eq:tau.formula} gives
	\begin{align}
		| \theta(\tilde p_1(\tau))-\tilde \theta (p_2(\tau))| & \geq \eta e^{-q(k+1)} \| \theta - \underline \theta_{\tau_i} \|_{\tau_i} \nonumber\\
		& \geq e^{-q\tau} \eta e^{q(\tau_i+\tau'-1)} \| \theta - \underline \theta_{\tau_i} \|_{\tau_i} \nonumber\\
		& \gtrsim_{s} e^{-s\tau},\label{eq:final.bound}
	\end{align}
	where in \eqref{eq:final.bound} we used $q<s$. This is precisely \eqref{eq:variation inequality tau}.
	\end{proof}


	\section{Proof of Theorem \ref{thm:Main}}\label{sec:Proof}
	
	Let $s>0$ and let $t_s\in (0,T_0)$ be given by Proposition \ref{lem:Grading}. Then, consider the points $p_1(t)$, $p_2(t)$ from Proposition \ref{lem:Grading}, and the curve $\gamma_t \subset L_t \cap B_{2 \sqrt{T_0-t}}(x_0)$ providing a Type I bridge between these points from Lemma \ref{lem:Bridge}. Then, applying Proposition \ref{lem:Grading}, the fundamental theorem of calculus, and Lemma \ref{lem:Bridge}, in this order, we obtain:
	\begin{align}
		(T_0-t)^s & \lesssim_s  |\theta(p_2(t)) - \theta(p_1(t))| \nonumber\\
		& = \left|\int_{\gamma_t} d\theta \right| \nonumber\\
		& \lesssim \mathrm{Length}(\gamma_t) \sup_{L_t \cap B_{2 \sqrt{T_0-t}}(x_0)} |d \theta| \nonumber\\
		& \lesssim (T_0-t)^{1/2} \sup_{L_t \cap B_{2 \sqrt{T_0-t}}(x_0)} |d \theta|, \label{eq:FTC.ineq}
	\end{align}
	for all $t\in (t_s,T_0)$. 
	Rearranging \eqref{eq:FTC.ineq} and observing that $|H_{L_t}|=|d\theta|$ gives
	\begin{equation}
	\sup_{L_t \cap B_{2 \sqrt{T_0-t}}(x_0)} |H_{L_t}| \gtrsim_s (T_0-t)^{s-1/2},
	\end{equation}
	which yields \eqref{eq:s.blowup} as claimed. Then, for any $r< 1/2$, we can choose $0<s<1/2-r$ and thus
\begin{equation}
(T_0-t)^r \sup_{L_t \cap B_{R \sqrt{T_0-t}}(x_0)} |H_{L_t}| \geq (T_0-t)^{r+s-1/2} \to + \infty
\end{equation}
	as $t \nearrow T_0$, which is \eqref{eq:r.blowup} as desired.  This proves the main result, Theorem \ref{thm:Main}.


\begin{bibdiv}
		\begin{biblist}
			
			\bibitem[B07]{Bridgeland} T.~Bridgeland, \textit{Stability conditions on triangulated categories.} Ann.~of Math.~(2) {\bf 166} (2007), no.~2, 317--345.
			
		\bibitem[DBF05]{Douglas} M.~R.~Douglas, B.~Fiol, and C.~R\"omelsberger, \textit{Stability and BPS branes},  J.~High Energy Phys.~{\bf 2005.09} (2005), 006, 15 pp.
		
		\bibitem[J15]{Joyce} D.~Joyce, \textit{Conjectures on Bridgeland stability for Fukaya categories of Calabi--Yau manifolds, special Lagrangians, and Lagrangian mean curvature flow}, EMS Surv.~Math.~Sci.~{\bf 2} (2015), no.~1, 1--62.
		
		\bibitem[LT26]{Lee-Tsai} P.~H.~Lee and C.-J.~Tsai, \textit{Infinite-time singularities with vanishing mean curvature for Lagrangian mean curvature flow in Gibbons--Hawking spaces}, arXiv:2606.28767 (2026).
		
		\bibitem[LW19]{LiWang} H.~Li and B.~Wang, \textit{The extension problem of the mean curvature flow (I)}, Invent.~Math.~{\bf 218} (2019), no.~3, 721--777.
		
		\bibitem[LO25]{LO} J.~D.~Lotay and G.~Oliveira, \textit{Neck pinch singularities and Joyce conjectures in Lagrangian mean curvature flow with circle symmetry,} J.~Eur.~Math.~Soc.~(2025), published online first.
			
			
		\bibitem[LSS24]{LSS1}
		J.~D.~Lotay, F.~Schulze, and G.~Sz\'ekelyhidi,
		\textit{Ancient solutions and translators of Lagrangian mean curvature flow},
		Publ.~Math.~Inst.~Hautes \`Etudes Sci.~\textbf{140} (2024), 1--35.
		
		\bibitem[LSS25]{LSS2}
		J.~D.~Lotay, F.~Schulze, and G.~Sz\'ekelyhidi,
		\textit{Neck pinches along the Lagrangian mean curvature flow of surfaces},
		arXiv:2208.11054 (2022).
		
		\bibitem[MT07]{MorganTian} J.~W.~Morgan and G.~Tian, \textit{Ricci flow and the Poincaré conjecture}, vol.~3, American Mathematical Soc., (2007).
		
		\bibitem[N07]{Neves1} A.~Neves, \textit{Singularities of Lagrangian mean curvature flow: zero-Maslov class case},
		Invent.~Math.~\textbf{168} (2007), 449--484.
		
		\bibitem[N11]{Neves2}
		A.~Neves, \textit{Recent progress on singularities of Lagrangian mean curvature flow},
		in Surveys in geometric analysis and relativity, Adv.~Lect.~Math.~(ALM), vol.~20, Int.~Press, Somerville, MA, (2011), 413--438.
		
		\bibitem[P03]{Perelman} G.~Perelman, \textit{Ricci flow with surgery on three-manifolds}, arXiv preprint math/0303109 (2003).

		\bibitem[St23]{Stolarski} M.~Stolarski, \textit{Existence of mean curvature flow singularities with bounded mean curvature}, Duke Math.~J.~{\bf 172} (2023), no.~7, 1235--1292. 			
			
		\bibitem[SS26]{StSu} M.~Stolarski and W.-B.~Su, \textit{Spectral analysis for finite-time singularities of Lagrangian mean curvature flow}, arXiv:2606.21541 (2026).
			
		\bibitem[Sz26]{Szeke}
		G.~Sz\'ekelyhidi, \textit{Generic neck pinch singularities along 2D Lagrangian mean curvature flow}, arXiv:2602.15771 (2026).
		
		\bibitem[Th01]{Thomas} R.~P.~Thomas, \textit{Moment maps, monodromy and mirror manifolds}, Symplectic geometry and mirror symmetry (Seoul, 2000) (2001), 467--498.
		
		\bibitem[TY02]{Thomas-Yau} R.~P.~Thomas and S.-T.~Yau, \textit{Special Lagrangians, stable bundles and mean curvature flow}, Comm.~Anal. Geom.~{\bf 10} (2002), no.~5, 1075--1113. 
		
		\bibitem[Wo20]{Wood} A.~Wood, \textit{Singularities of Lagrangian mean curvature flow}, PhD thesis, UCL (University College London), (2020).
			
		\end{biblist}
	\end{bibdiv}
	

\end{document}